\newtheorem{theorem}{Theorem}%[section]
\newtheorem{lemma}[theorem]{Lemma}
\newtheorem{corollary}[theorem]{Corollary}
\theoremstyle{definition}
\newtheorem{definition}[theorem]{Definition}
\theoremstyle{remark}
\DeclareMathOperator{\Heads}{Heads} % Vovk
\DeclareMathOperator{\Min}{Min}     % Vovk
\DeclareMathOperator{\Tails}{Tails} % Vovk
\newcommand{\A}{\smallskip\noindent\texttt A:\  }
\newcommand{\eps}{\varepsilon}
\newcommand{\F}{\mathcal F}
\newcommand{\Range}[1]{\ensuremath{{\text{Range}}(#1)}}
\newcommand{\Q}{\smallskip\noindent\texttt Q:\  }
\renewcommand{\P}{\mathbf P}
\renewcommand{\phi}{\varphi}
\title[p-values]{Fundamentals of p-values: Introduction}
\author{Yuri Gurevich}
\author{Vladimir Vovk}
\begin{document}

\begin{abstract}
  We explain the concept of p-values presupposing only rudimentary probability theory. We also use the occasion to introduce the notion of p-function, so that p-values are values of a p-function.

  The explanation is restricted to the discrete case with no outcomes of zero probability. We are going to address the general case elsewhere.
\end{abstract}

\maketitle

\section{Prelude}
\label{sec:prelude}

\noindent\texttt Q\footnotemark: The Cournot Principle that one of you wrote about \cite{G206} caught my attention:
``It is practically certain that a predicted event of small probability does not happen.''
How small should the probability be?

\footnotetext{Q and A are Quisani (a former student of the first author) and the authors respectively.}

\A Traditionally there have been two camps of naive Cournotians, the lax ones using the 5\% threshold and strict ones using the 1\% threshold.

\Q Why do you call them naive?

\A A better way is to report an appropriate p-value, without committing yourself to a threshold.

\Q Oh yes, I heard about p-values and even tried to read about them but got confused. Do you have a good reference?

\A Cox and Hinkley define p-values, even though they do not use the term,  in \S3 of the their 1974 book ``Theoretical Statistics'' \cite{CH}. That is the best reference that we have.

\Q How about explaining the concept to me right now? But please take into account that, while I have been exposed to probability theory and logic, I have not studied statistics.

\A The general case is somewhat involved \cite{GV}, but the discrete case is important and simple enough to explain the concept, especially if one presumes that every outcome has positive probability.

\section{Probability trials}
\label{sec:trials}

Recall that a discrete probability space is given by a nonempty set $\Omega$ and a function $\P$ from $\Omega$ to the real interval $[0,1]$ satisfying the following three requirements.
\begin{enumerate}
\item $\Omega$ is countable, i.e.\ finite or infinite countable.
\item Every $\P(x)\ge0$.
\item $\sum_{x\in\Omega} \P(x) = 1$.
\end{enumerate}
To simplify our exposition, we replace (2) with a stronger and relatively innocuous requirement
\begin{enumerate}
\item[(2$'$)] Every $\P(x) > 0$.
\end{enumerate}
The function $\P$ naturally extends to subsets $E$ of $\Omega$:\quad $\P(E) = \sum\{P(x): x\in E\}$.

Think of the probability space $(\Omega,\P)$ as a probability trial. Elements of $\Omega$ are outcomes, subsets of $\Omega$ are events, and $\P$ assigns probabilities to outcomes and events.
The probability distribution $\P$ is the \emph{null hypothesis} of the trial. Think of it as an alleged probability distribution. The purpose of the trial is to test $\P$.

\Q I guess, a predicted event of small probability, if and when it occurs, provides falsifying evidence against the null hypothesis.

\A Yes, but we do not need to define what probabilities count as small. For now, it suffices to say this:
The smaller the probability of the event in question, the stronger the impugning power of the event. Suppose for example that $\Omega = \{a,b,c\}$ and that $\P(a) = 990/1000$, $\P(b) = 9/1000$ and $\P(c) = 1/1000$. We can order the nonempty events in the order of increasing impugning power: $\{a,b,c\}$, $\{a,b\}$, $\{a,c\}$, $\{a\}$, $\{b,c\}$, $\{b\}$, $\{c\}$.

\Q But the event $\{a,b,c\}$ has zero impugning power.

\A True, and we don't say that it has any. We just say that its impugning power is less than that of the other 6 events. Also, we ignore the empty event because it does not occur.

\section{The logic angle}
\label{sec:logic}

\Q For logicians, a discrete probabilistic trial $(\Omega,\P)$ is a relatively simple logic structure, an extension of real arithmetic with a separate sort Outcome and a function $\P$ of type Outcome $\to$ Real, subject to requirements (1)--(3) of \S\ref{sec:trials}.

\A In applications, a trial comes with additional information.
Here are two examples, in a simplified form, from the paper you cited.

\medskip\noindent
\textit{Coin Example}.
  A coin is tossed 42 times. The null hypothesis is that all $2^{42}$ outcomes are equally probable. The actual outcome has 41 heads and just one tail. Clearly the impugning power of that outcome exceeds that of a random outcome.

\medskip\noindent
\textit{Lottery Example}.
4,000,000 people bought 10,000,000 tickets. Any of them could be the winner; thus these are 4,000,000 potential outcomes. The null hypothesis is that the probability of a person to win is proportional to the number of his or her tickets. The wife Donna of the lottery organizer John bought 3 tickets and won the lottery. Clearly the impugning power of that outcome exceeds that of a random outcome.

\Q Additional information complicates the matter. Clearly there are countless different kinds of additional information. How do statisticians deal with them?

\A They simplify the situation. They replace any additional structure with a linear preorder of the outcomes and restrict attention to the downward closed events.

\Q Wow, this is quite a simplification.

\A Often the linear preorder is given implicitly, by means of a test statistic or a nested family of events.

\Q Please remind me the necessary definitions.

\section{Test tools}
\label{sec:tools}

Let $(\Omega,\P)$ be a probability trial. A \emph{preorder} of a nonempty set $\Omega$ is a binary relation $\le$ on $\Omega$ that is reflexive and transitive so that
\begin{itemize}
  \item $x\le x$ for every $x$, and
  \item if $x\le y$ and $y\le z$ then $x\le z$.
\end{itemize}
A preorder $\le$ of $\Omega$ is \emph{linear} if
\begin{itemize}
  \item $x\le y$ or $y\le x$ for all $x,y$.
\end{itemize}

\begin{definition}\mbox{}
\begin{itemize}
  \item A \emph{test order} of $\Omega$ is a linear preorder on $\Omega$.
  \item A \emph{test pyramid} over $\Omega$ is a family of events linearly ordered by inclusion.
  \item A \emph{test statistic} on $\Omega$ is a function from $\Omega$ to the reals.
\end{itemize}
\end{definition}

\medskip\noindent\textit{Coin Example, cont}.
  Let $\Heads{(x)}$ and $\Tails{(x)}$ be the numbers of the heads and tails in an outcome $x$ respectively. The function $M(x) = \Min{\{\Heads{(x)},\Tails{(x)}\}}$ is a test statistic. The relation $M(x)\le M(y)$ is a linear preorder of the outcomes. Events $\{x: M(x) \le n\}$, where $n$ ranges from 0 to 42 form a test pyramid.
\medskip

Given a linear preorder $\le$ on $\Omega$, an event $E\subseteq\Omega$ is \emph{downward closed} if $x\le y\in E$ implies $x\in E$ for all $x,y$. The downward closed events are nested: if  $E_1, E_2$ are downward closed then either $E_1\subseteq E_2$ or $E_2\subseteq E_1$.
Indeed if $E_2\not\subseteq E_1$ and $x\in E_2-E_1$ then $E_1 \subseteq \{y: y\le x\} \subseteq E_2$.

Note that there may be distinct outcomes $x,y$ with $x\le y$ and $y\le x$; such outcomes $x,y$ called \emph{quasi-equivalent}. Linear preorders are also known as linear quasi-orders.

Test orders, test pyramids and test statistics are three tools to measure the relative strength of the impugning evidence provided by a given outcome or event. We adopt the following convention.
\begin{itemize}
\item[-]
For a given test order, smaller outcomes provide stronger impugning evidence.
\item[-]
For a given test statistic, smaller values provide stronger impugning evidence.
\end{itemize}
And of course,  for a given test pyramid of events, smaller events provide stronger impugning evidence.

In the following two sections we show that the three tools are equivalent in an appropriate sense.

\section{Test orders and test pyramids}
\label{sed:equiv}

If $\le$ is a test order, let $[\le x]$ be the downward closure $\{y: y\le x\}$ of $x$.

\begin{definition}\label{def:o-and-c}\mbox{}
\begin{itemize}
\item Every test order $\le$ \emph{induces} a test pyramid, namely the family of events $[\le x]$.
\item Every test pyramid $\F$ \emph{induces} a test order
\[
  \{ (x,y): (\forall E\in\F)[y\in E \implies x\in E] \}
\]
\end{itemize}
\end{definition}
In words, the formula $(\forall E\in\F)[y\in E \implies x\in E]$ says that every member of $\F$ that contains $y$ contains $x$ as well.

\begin{lemma}\label{lem:o-p-o}
If a test order $\le$ induces a test pyramid $\F$ then $\F$ induces $\le$.
\end{lemma}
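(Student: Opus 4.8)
The plan is to show that the test order induced by $\F$, which I will denote $\preceq$, coincides with the original test order $\le$. By definition, $x \preceq y$ means that every member of $\F$ containing $y$ also contains $x$. Since $\F$ is exactly the family of downward closures $\{[\le z] : z \in \Omega\}$, the whole task reduces to verifying the equivalence $x \le y \iff x \preceq y$, which I would establish by proving the two implications separately.

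For the forward direction, suppose $x \le y$ and let $E \in \F$ be any member with $y \in E$. Writing $E = [\le z]$, the condition $y \in E$ says $y \le z$, and transitivity of $\le$ then yields $x \le z$, i.e.\ $x \in E$. As $E$ was arbitrary, this gives $x \preceq y$. This half uses only transitivity. For the reverse direction, the key move is to test the defining condition of $\preceq$ against a single, well-chosen member of the pyramid rather than all of $\F$. Assume $x \preceq y$ and take $E = [\le y] \in \F$. By reflexivity $y \le y$, so $y \in E$; the definition of $x \preceq y$ then forces $x \in E$, which is precisely $x \le y$. This half uses only reflexivity.

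I do not expect a genuine obstacle: the statement is a short double-implication, with transitivity powering one direction and reflexivity the other, and the totality of the preorder plays no role. The only step that requires a moment of thought is the reverse direction, where one must notice that probing with the particular event $[\le y]$ suffices to recover $x \le y$; everything else is a direct unwinding of the two induction rules in Definition~\ref{def:o-and-c}.
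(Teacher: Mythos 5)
Your proof is correct and takes essentially the same approach as the paper's: the paper compresses the argument into a chain of equivalences, but the content is identical --- transitivity for one direction and probing with the single event $[\le y]$ (via reflexivity) for the other.
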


\begin{proof}
Let $\preceq$ be the test order induced by $\F$.
\begin{align*}
x\preceq y
  &\iff (\forall E\in\F)[y\in E \implies x\in E]
    &&\text{by the definition of $x\preceq y$}\\
  &\iff (\forall z\in\Omega)[y\in[\le z] \implies x\in[\le z]]
    &&\text{by the definition of $\F$}\\
  &\iff x\in[\le y] \\
  &\iff x\le y
  &&\qedhere
\end{align*}
\end{proof}

\noindent
\begin{minipage}{\textwidth}
\begin{definition}\mbox{}
  \begin{itemize}
    \item Two test pyramids are \emph{equivalent} if they induce the same test order.
    \item The \emph{canonic version} $\hat{\F}$ of a test pyramid $\F$ is the test pyramid induced by the test order induced by $\F$.
    \item A test pyramid $\F$ is \emph{canonic} if it is the canonic version of itself.
  \end{itemize}
\end{definition}
\end{minipage}

\Q Show me a non-canonic test pyramid.

\A Let $\Omega$ be the set of numbers $\pm\frac1n$ where $n = 1, 2, \dots$. The precise definition of the distribution $\P$ on $\Omega$ is not important provided that every element of $\Omega$ has a positive probability. Let $\le$ be the standard order on the numbers $\pm\frac1n$. The desired test pyramid $\F$ is the collection of all subsets of $\Omega$ downward closed with respect to $\le$. Clearly $\F$ induces the test order $\le$. The pyramid $\hat\F$, induced by $\le$, consists of sets $[\le x]$ where $x\in\Omega$. It is a proper subcollection of $\F$. The difference $\F-\hat\F$ consists of three sets: $\emptyset$, $[\le0]$ and $\Omega$.

\Q You could have require that a test order $\le$ induces the pyramid of all the events downward closed with respect to $\le$. Then your counter-example would not work.

\A That is true. But then a non-canonic pyramid would be obtained from the pyramid of downward-closed events by omitting one or more of the sets $\emptyset$, $[\le0]$, $\Omega$.

\Q What about canonic test orders?

\A In our current setup every test order can be viewed as canonic. We saw already that every test order is induced by a test pyramid. We'll see below that every test order is induced by a test statistic. This is not so in the general case. In fact, it is not so even in the discrete case with outcomes of zero probability.

\begin{lemma}
Every pyramid $\F$ is equivalent to a unique canonic test pyramid, namely to $\hat{\F}$.
\end{lemma}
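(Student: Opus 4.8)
The plan is to let $\le$ be the test order induced by $\F$, so that by definition $\hat{\F}$ is the test pyramid induced by $\le$. The statement then splits into three claims: that $\F$ and $\hat{\F}$ are equivalent, that $\hat{\F}$ is canonic, and that $\hat{\F}$ is the only canonic pyramid equivalent to $\F$.

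For equivalence I would observe that $\F$ induces $\le$ by the very choice of $\le$, so it suffices to show that $\hat{\F}$ also induces $\le$. But $\hat{\F}$ is, by construction, the pyramid induced by the test order $\le$, so Lemma~\ref{lem:o-p-o} applies directly and tells us that $\hat{\F}$ induces $\le$ in turn. Hence $\F$ and $\hat{\F}$ induce the same test order and are therefore equivalent.

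Canonicity of $\hat{\F}$ is then a short computation: the test order induced by $\hat{\F}$ is $\le$ (just shown), and the pyramid induced by $\le$ is $\hat{\F}$ by definition, so the canonic version of $\hat{\F}$ is $\hat{\F}$ itself. For uniqueness, suppose $\G$ is any canonic pyramid equivalent to $\F$. Equivalence forces $\G$ to induce the same test order $\le$ as $\F$, and canonicity forces $\G$ to equal the pyramid induced by the order it induces, namely the pyramid induced by $\le$, which is exactly $\hat{\F}$. Therefore $\G = \hat{\F}$.

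The proof is essentially a matter of chasing the definitions of \emph{induces} and \emph{canonic version} through one another, and I expect no real obstacle. The single piece of genuine content is the appeal to Lemma~\ref{lem:o-p-o}, which closes the loop by guaranteeing that a pyramid arising as the induction of a test order reproduces that order; the only care needed is in keeping the two directions of the induction operation straight.
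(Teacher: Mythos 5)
Your proof is correct and follows essentially the same route as the paper's: both take $\le$ to be the test order induced by $\F$, note that $\hat{\F}$ induces $\le$ in turn (the paper says ``clearly'' where you explicitly cite Lemma~\ref{lem:o-p-o}), and obtain uniqueness by arguing that any canonic pyramid equivalent to $\F$ must coincide with the pyramid induced by $\le$, i.e.\ with $\hat{\F}$. Your explicit verification that $\hat{\F}$ is itself canonic is a small addition that the paper leaves implicit.
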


\begin{proof}
Let $\le$ be the test order induced by $\F$.
By the definition, $\hat{\F}$ is the collection of events $[\le x]$. Clearly $\hat{\F}$ induces $\le$, and so it is equivalent to $\F$.

It remains to check that $\hat{\F}$ is the only canonic test pyramid equivalent to $\F$. Let $\F'$ be a canonic test pyramid that is equivalent to $\F$ and therefore induces $\le$. Since $\F'$ is canonic, it is induced by $\le$. But then $\F' = \hat{\F}$.
\end{proof}

\begin{corollary}\label{cor:C2f}\mbox{}
\begin{enumerate}
\item If a test pyramid $\F$ induces a test order $\le$ then $\le$ induces the canonic version $\hat\F$ of $\F$.
\item If a test order $\le$ induces a test pyramid $\F$ then $\F$ is canonic.
\end{enumerate}
\end{corollary}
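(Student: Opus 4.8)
The plan is to prove both parts directly by unfolding the two notions of ``induces'' from Definition \ref{def:o-and-c} together with the definition of the canonic version, using Lemma \ref{lem:o-p-o} as the single substantive ingredient. The central point is that ``induces'' runs in two directions---an order induces a pyramid, a pyramid induces an order---and Lemma \ref{lem:o-p-o} guarantees that the composite \emph{order} $\to$ \emph{pyramid} $\to$ \emph{order} returns the original order. Once this round-trip property is in hand, both claims reduce to substitutions into the definitions.

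For part (1), I would let $\le$ be the test order induced by $\F$, which exists and is unique because the second clause of Definition \ref{def:o-and-c} specifies it as a concrete relation. By the definition of the canonic version, $\hat\F$ is precisely the test pyramid induced by the test order induced by $\F$, that is, the pyramid induced by $\le$. But the assertion that the pyramid induced by $\le$ equals $\hat\F$ is exactly the assertion that $\le$ induces $\hat\F$. Thus part (1) follows immediately, without even invoking Lemma \ref{lem:o-p-o}.

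For part (2), I would assume $\le$ induces $\F$, so that $\F$ is the family $\{[\le x]: x\in\Omega\}$. Here Lemma \ref{lem:o-p-o} does the real work: it tells me that $\F$ in turn induces $\le$, i.e.\ the test order induced by $\F$ is again $\le$. Consequently $\hat\F$, defined as the pyramid induced by the test order induced by $\F$, is the pyramid induced by $\le$, which by hypothesis is $\F$ itself. Hence $\F=\hat\F$, which is precisely the statement that $\F$ is canonic.

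The only place demanding care---the ``main obstacle,'' such as it is---is keeping the two senses of ``induces'' cleanly separated and checking that each is genuinely functional, so that phrases like ``the test order induced by $\F$'' denote a single object fit for substitution. The mathematical content that legitimizes the substitution in part (2) is entirely Lemma \ref{lem:o-p-o}; everything else is bookkeeping on the definitions.
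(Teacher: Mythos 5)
Your proof is correct and follows essentially the same route as the paper's: part (1) is immediate from the definition of $\hat\F$, and part (2) combines Lemma~\ref{lem:o-p-o} with the functionality of ``induces'' to conclude $\F=\hat\F$. The only cosmetic difference is that you unfold the definitions again in part (2) where the paper simply cites part (1).
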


\begin{proof}\mbox{}
(1) By the definition of $\hat\F$.

\smallskip\noindent
(2) By Lemma~\ref{lem:o-p-o}, $\F$ induces $\le$. By (1), $\F =\hat\F$.
\end{proof}

\section{Test orders and test statistics}
\label{sec:o-s}

\begin{definition}\label{def:ind}\mbox{}
\begin{itemize}
\item Every test order $\le$ \emph{induces} a test statistic $f(x) = \P[\le x]$.
\item Every test statistic $f$ \emph{induces} a test order
$\{(x,y): f(x)\le f(y)\}$.
\end{itemize}
\end{definition}

For a test statistics $f$ and a real number $f$, let $[f\le r]$ be the event $\{x: f(x)\le r\}$. In the following lemma, we take advantage of having no zero-probability outcomes.

\begin{lemma}\label{lem:null1}
For any test order $\le$, we have
$x\le y$ if and only if $\P[\le x]\le \P[\le y]$.
\end{lemma}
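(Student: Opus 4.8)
The plan is to prove the two implications separately, noting in advance that the forward implication is purely order-theoretic while the reverse implication is exactly where the positivity assumption $(2')$ does its work.

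First I would handle the direction $x\le y \implies \P[\le x]\le\P[\le y]$. If $x\le y$, then every $z$ with $z\le x$ satisfies $z\le y$ by transitivity, so $[\le x]\subseteq[\le y]$; monotonicity of $\P$ on subsets (each summand is nonnegative) then gives $\P[\le x]\le\P[\le y]$. This step uses only reflexivity and transitivity and makes no appeal to positivity.

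For the converse I would argue by contraposition, and this is where linearity and positivity enter together. Suppose $x\not\le y$. By linearity of the test order we then have $y\le x$, so as above $[\le y]\subseteq[\le x]$. Moreover $x\in[\le x]$ by reflexivity, while $x\notin[\le y]$ since $x\le y$ fails; hence the inclusion is strict, with $x$ itself witnessing the gap. The positivity requirement $(2')$ now yields $\P([\le x]\setminus[\le y])\ge\P(x)>0$, so $\P[\le x]>\P[\le y]$. Contraposing, $\P[\le x]\le\P[\le y]$ forces $x\le y$.

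The only delicate point --- and the one the sentence about zero-probability outcomes flags --- is this reverse direction: strict inclusion of the downward closures does not by itself separate their probabilities, and it is precisely the positive probability of the single witnessing outcome $x$ that converts the strict set inclusion into a strict inequality of probabilities. Without $(2')$ one could have $x\not\le y$ yet $\P[\le x]=\P[\le y]$, and the equivalence would fail; so I expect the careful invocation of $\P(x)>0$ to be the crux rather than any computation.
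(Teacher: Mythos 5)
Your proof is correct and follows essentially the same route as the paper's: the forward direction via the inclusion $[\le x]\subseteq[\le y]$ and monotonicity of $\P$, and the reverse direction by contraposition, using linearity to get $[\le y]\subsetneq[\le x]$ with $x$ as the witnessing outcome and then $\P(x)>0$ to make the probability inequality strict. Your closing remark correctly identifies the role of requirement $(2')$, which is exactly the point the paper flags just before stating the lemma.
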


\begin{proof}
If $x\le y$ then $[\le x] \subseteq [\le y]$ and therefore $\P[\le x] \le \P[\le y]$. This establishes the only-if implication. We prove the if implication by contrapositive. Suppose that $x>y$. Then $x$ belongs to $[\le x]$ but not to $[\le y]$ so that $[\le x]\supsetneq [\le y]$. Since $\P(x)>0$, we have $\P[\le x] > \P[\le y]$.
\end{proof}

\begin{lemma}\label{lem:null2}
For any test statistic $f$, we have:
\begin{enumerate}
\item If $f$ induces a test order $\le$ then every $[\le x] = [f\le f(x)]$.
\item $f(x)\le f(y) \iff \P[f\le f(x)] \le \P[f\le f(y)]$.
\end{enumerate}
\end{lemma}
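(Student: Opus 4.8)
The plan is to treat the two parts in sequence, using part~(1) as the bridge that lets part~(2) inherit everything from Lemma~\ref{lem:null1}.

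For part~(1) I would simply unwind the definition of the induced order. By Definition~\ref{def:ind}, the order $\le$ induced by $f$ satisfies $y\le x \iff f(y)\le f(x)$. Hence
\[
  [\le x] = \{y : y\le x\} = \{y : f(y)\le f(x)\} = [f\le f(x)],
\]
which is exactly the claimed identity. No probabilities enter here; it is a pure rewriting of set-builder notation, so I expect no difficulty.

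For part~(2) the idea is to reduce to Lemma~\ref{lem:null1} by way of part~(1). Let $\le$ be the test order induced by $f$. Part~(1) gives $[f\le f(x)] = [\le x]$ and $[f\le f(y)] = [\le y]$, whence $\P[f\le f(x)] = \P[\le x]$ and $\P[f\le f(y)] = \P[\le y]$. On the other hand, the definition of the induced order gives $f(x)\le f(y) \iff x\le y$. Chaining these with Lemma~\ref{lem:null1} yields
\[
  f(x)\le f(y) \iff x\le y \iff \P[\le x]\le\P[\le y] \iff \P[f\le f(x)]\le\P[f\le f(y)],
\]
as desired. (The borderline case $f(x)=f(y)$ is handled automatically by this chain, since then every link holds with equality.)

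The only point demanding any care is that the middle equivalence rests on Lemma~\ref{lem:null1}, whose argument uses the positivity assumption $\P(x)>0$ for all outcomes; this is precisely why the excerpt flags that it is ``taking advantage of having no zero-probability outcomes.'' Aside from invoking that prior result, the proof is a routine composition of equivalences, and I do not anticipate a genuine obstacle: the substance has already been done in establishing the induced-order machinery and Lemma~\ref{lem:null1}.
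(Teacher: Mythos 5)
Your proof is correct and follows the paper's own route exactly: part~(1) by unwinding the definition of the induced order, and part~(2) by combining part~(1) with Lemma~\ref{lem:null1}. You merely spell out the chain of equivalences that the paper compresses into ``Follows from (1) and Lemma~\ref{lem:null1}.''
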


\begin{proof}\mbox{}

\smallskip\noindent
(1) $[\le x] = \{y: y\le x\} = \{y: f(y)\le f(x)\} = [f\le f(x)]$.

\smallskip\noindent
(2) Follows from (1) and Lemma~\ref{lem:null1}.
\end{proof}

\begin{lemma}\label{lem:o-s-o}
If a test order $\le$ induces a test statistic $f$ then $f$ induces $\le$.
\end{lemma}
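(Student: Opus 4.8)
The plan is to reduce the statement to Lemma~\ref{lem:null1}, which already carries the essential content. Write $\preceq$ for the test order induced by $f$. By Definition~\ref{def:ind}, this means $x\preceq y \iff f(x)\le f(y)$. Since $f$ is by hypothesis the test statistic induced by $\le$, we have $f(x)=\P[\le x]$, and substituting this in gives $x\preceq y \iff \P[\le x]\le\P[\le y]$.

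The goal is to show $\preceq$ and $\le$ are the same relation on $\Omega$. From the equivalence just displayed, $x\preceq y \iff \P[\le x]\le\P[\le y]$, and Lemma~\ref{lem:null1} asserts precisely that $\P[\le x]\le\P[\le y] \iff x\le y$. Composing the two equivalences yields $x\preceq y \iff x\le y$ for all $x,y$, so $\preceq$ and $\le$ coincide. This is exactly the claim that $f$ induces $\le$.

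There is no genuine obstacle at this stage: the entire weight of the argument rests on Lemma~\ref{lem:null1}, and the present proof amounts to bookkeeping—unwinding the definition of the induced order and substituting $f(x)=\P[\le x]$. It is worth noting, however, where the difficulty was quietly discharged: Lemma~\ref{lem:null1} uses the positivity assumption $(2')$ in an essential way. Without it, a strict inclusion $[\le x]\supsetneq[\le y]$ need not force a strict inequality $\P[\le x]>\P[\le y]$, and then $f$ could collapse outcomes that $\le$ keeps apart, breaking the equivalence. So the only live step here is the substitution, and the substantive work has already been done upstream.
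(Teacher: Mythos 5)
Your proof is correct and is essentially identical to the paper's: both unwind the definition of the induced order, substitute $f(x)=\P[\le x]$, and appeal to Lemma~\ref{lem:null1} to close the chain of equivalences. Your added remark about where the positivity assumption $(2')$ is actually used is accurate and a nice observation, but the argument itself is the same.
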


\begin{proof}
Let $\preceq$ be the test order induced by $f$.
\begin{align*}
x\le y
  &\iff \P[\le x] \le \P[\le y]
                      && \text{by Lemma~\ref{lem:null1},}\\
  &\iff f(x) \le f(y) && \text{by the definition of $f$,}\\
  &\iff x \preceq y   && \text{by the definition of $\preceq$.}
  \qedhere
\end{align*}
\end{proof}

\begin{definition}\label{def:norm}\mbox{}
\begin{itemize}
\item Two test statistics are \emph{equivalent} if they induce the same test order.
\item The \emph{canonic version} of a test statistic $f$ is the test statistic\\ $\hat f(x) = \P [f\le f(x)]$.
\item A test statistic is \emph{canonic} if it is the canonic version of itself.
\end{itemize}
\end{definition}

\begin{lemma}\label{lem:equiv}
Every test statistic $f$ is equivalent to a unique canonic test statistics, namely to $\hat f$.
\end{lemma}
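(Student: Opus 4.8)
The plan is to prove the statement in two parts, exactly mirroring the structure of the earlier Lemma on canonic test pyramids: first that $\hat f$ is a canonic test statistic equivalent to $f$ (existence), and then that it is the \emph{only} such statistic (uniqueness). The key technical tool will be Lemma~\ref{lem:null2}, which lets me translate statements about $f$-values into statements about probabilities $\P[f\le f(x)]$ and back.

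First I would establish equivalence of $f$ and $\hat f$. Two statistics are equivalent when they induce the same test order, so I need to show that $f(x)\le f(y)$ iff $\hat f(x)\le\hat f(y)$ for all $x,y$. Unwinding the definition, $\hat f(x) = \P[f\le f(x)]$ and $\hat f(y) = \P[f\le f(y)]$, so the claim $\hat f(x)\le\hat f(y)\iff \P[f\le f(x)]\le\P[f\le f(y)]$ is immediate, and by part~(2) of Lemma~\ref{lem:null2} this is equivalent to $f(x)\le f(y)$. Chaining these gives $f(x)\le f(y)\iff\hat f(x)\le\hat f(y)$, so $f$ and $\hat f$ induce the same test order and are equivalent.

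Next I would check that $\hat f$ is itself canonic, i.e.\ that $\hat{\hat f} = \hat f$. By definition $\hat{\hat f}(x) = \P[\hat f\le\hat f(x)]$, so I must show $\P[\hat f\le\hat f(x)] = \P[f\le f(x)]$. This reduces to verifying that the events coincide: $[\hat f\le\hat f(x)] = [f\le f(x)]$. Since $f$ and $\hat f$ induce the same test order $\le$ (just established), part~(1) of Lemma~\ref{lem:null2} gives $[f\le f(x)] = [\le x] = [\hat f\le\hat f(x)]$, so the two probabilities agree and $\hat{\hat f}=\hat f$.

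For uniqueness, suppose $g$ is a canonic test statistic equivalent to $f$; I want $g=\hat f$ as functions. Because $g$ is equivalent to $f$, it induces the same test order $\le$ as $f$, and since $g$ is canonic, $g(x) = \P[g\le g(x)]$. Applying part~(1) of Lemma~\ref{lem:null2} to $g$ gives $[g\le g(x)] = [\le x]$, hence $g(x) = \P[\le x] = \P[f\le f(x)] = \hat f(x)$ for every $x$. \textbf{The main obstacle} I anticipate is purely bookkeeping: the word ``canonic'' is defined via the \emph{same} statistic ($g=\hat g$), so one must be careful to use $g$'s own canonicity together with the fact that $g$ shares the test order of $f$, rather than conflating $\hat g$ with $\hat f$ prematurely; once part~(1) of Lemma~\ref{lem:null2} is invoked for the correct test order, the identity of values follows directly.
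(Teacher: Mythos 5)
Your proof is correct and follows essentially the same route as the paper's: equivalence of $f$ and $\hat f$ via part~(2) of Lemma~\ref{lem:null2}, then uniqueness by computing $g(x)=\P[g\le g(x)]=\P[\le x]=\P[f\le f(x)]=\hat f(x)$ using part~(1). The only difference is that you explicitly verify that $\hat f$ is canonic, a step the paper relies on (in the line ``as $\hat f$ is canonic'') without spelling it out, so your version is if anything slightly more complete.
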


\begin{proof}
First we check that $f$ is equivalent to $\hat{f}$.
\begin{align*}
f(x)\le f(y)
    &\iff \P[f\le f(x)] \le \P[f\le f(y)]
            &&\text{by Lemma~\ref{lem:null2},}\\
    &\iff \hat f(x)\le \hat f(y)
            &&\text{by the definition of $\hat{f}$}.
\end{align*}
Second we check that $\hat{f}$ is the only canonic test statistic equivalent to $f$. If $f'$ is a canonic test statistic equivalent to $f$ then
\begin{align*}
f'(x)\le f'(y)
    &\iff f(x)\le f(y)
            &&\text{because $f$ and $f'$ are equivalent,}\\
    &\iff \hat f(x)\le \hat f(y)
            &&\text{because $f$ and $\hat f$ are equivalent.}
\end{align*}
Thus $f'$, $\hat f$ and $f$ are all equivalent and therefore induce the same order $\le$.
\begin{align*}
 f'(x) &= \P[f'\le f'(x)]
           &&\text{as $f'$ is canonic}\\
       &= \P[\le x]
           &&\text{by Lemma~\ref{lem:null1}, part~1}\\
       &= \P[\hat f \le \hat f(x)]
           &&\text{by Lemma~\ref{lem:null1}, part~1}\\
       &= \hat f(x)
           &&\text{as $\hat f$ is canonic}.\qedhere
\end{align*}
\end{proof}

\begin{corollary}\label{cor:C2f}\mbox{}
\begin{itemize}
\item If a test statistic $f$ induces a test order $\le$ then $\le$ induces the canonic version $\hat f$ of $f$.
\item The test statistic induced by any test order is canonic.
\end{itemize}
\end{corollary}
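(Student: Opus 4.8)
The plan is to mirror the proof of the analogous corollary for test pyramids from the previous section, deriving both clauses from the bridging lemmas already in hand. For the first clause, I would unwind the definitions and then invoke Lemma~\ref{lem:null2}(1). Concretely, let $g$ be the test statistic that the order $\le$ induces; by Definition~\ref{def:ind} we have $g(x) = \P[\le x]$. The target is to show $g = \hat f$, where $\hat f(x) = \P[f\le f(x)]$ by Definition~\ref{def:norm}. The single nontrivial input is Lemma~\ref{lem:null2}(1), which---precisely because $f$ induces $\le$ by hypothesis---gives $[\le x] = [f\le f(x)]$ for every $x$. Applying $\P$ to both sides yields $g(x) = \hat f(x)$ for all $x$, so $\le$ induces $\hat f$. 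It is worth noting that, unlike the pyramid case where the first clause held ``by the definition of $\hat\F$'' alone, here the canonic version is defined by the explicit formula $\P[f\le f(x)]$ rather than as ``the statistic induced by the order induced by $f$,'' so this one equality of events is genuinely needed.

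For the second clause, I would take an arbitrary test order $\le$ and let $f(x) = \P[\le x]$ be the statistic it induces. By Lemma~\ref{lem:o-s-o}, $f$ in turn induces $\le$, so the first clause applies to this pair and tells us that $\le$ induces $\hat f$. But $\le$ also induces $f$ by construction, and a test order induces exactly one test statistic (the function $\P[\le\cdot]$ of Definition~\ref{def:ind}); hence $f = \hat f$, i.e.\ $f$ is canonic.

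The step I would watch most carefully is the application of Lemma~\ref{lem:null2}(1) in the first clause: that lemma's first part is conditional on $f$ genuinely inducing the order $\le$ in question, and one must confirm that this is exactly the hypothesis ``$f$ induces $\le$'' we are handed. Once that matching is secured, both clauses are essentially one-liners, so I expect no real obstacle. No positivity subtlety surfaces explicitly here, since assumption~(2$'$) is already folded into Lemma~\ref{lem:null1}, on which Lemma~\ref{lem:o-s-o} depends; thus, in contrast to the general case flagged earlier in the dialogue, the discrete positive-probability setting makes this corollary routine.
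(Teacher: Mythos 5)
Your proof is correct and takes essentially the same route as the paper's: clause (1) is obtained by rewriting $\P[\le x]$ as $\P[f\le f(x)]=\hat f(x)$ via Lemma~\ref{lem:null2}(1), and clause (2) follows by invoking Lemma~\ref{lem:o-s-o} and then applying clause (1) to conclude $f=\hat f$. Your side remarks (that the event identity is genuinely needed here, unlike in the pyramid case) are accurate but do not change the argument.
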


\begin{proof}\mbox{}

\smallskip\noindent
(1) Let $f'$ be the test statistic induced by $\le$. We have
\begin{align*}
  f'(x) &= \P[\le x] &&\text{by the definition of $f'$,}\\
        &= \P[f\le f(x)] &&\text{by Lemma~\ref{lem:null2},}\\
        &= \hat f(x) &&\text{by the definition of $\hat f$.}
\end{align*}

\smallskip\noindent
(2) Let $f$ be the test statistic induced by a given test order $\le$. By Lemma~\ref{lem:o-s-o}, $f$ induces $\le$. By (1), $f = \hat f$.
\end{proof}

\section{Exact p-values}
\label{sec:exact}

In this section we define exact p-values. A more general notion of p-value will be given in the next section.

\begin{definition}\label{def:p1}
  Given a probabilistic trial $(\Omega,\P)$ furnished with a test order $\le$, the \emph{exact p-value} of an outcome $x$ is the probability $\P[\le x]$.
\end{definition}

Notice the dependence on the test order. Of course, the test order can be given by means a test pyramid or test statistic. Test statistics are especially popular in applications. In this connection, let us give an alternative definition of exact p-values.

\begin{definition}\label{def:p2}
  Given a probabilistic trial $(\Omega,\P)$ furnished with a test statistic $f$, the \emph{exact p-value} of an outcome $x$ is the number $\hat f(x) = \P[f\le f(x)]$.
\end{definition}

Of course the two definitions are consistent.

\begin{lemma}\mbox{}
\begin{enumerate}
\item If a test statistic $f$ induces a test order $\le$ then $\P[\le x] = \P[f\le f(x)]$.
\item If a test order $\le$ induces a test statistic $f$ then $\P[f\le f(x)] = \P[\le x]$.
\end{enumerate}
\end{lemma}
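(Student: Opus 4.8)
The plan is to read off both equalities from facts already established, since in each case the two events whose probabilities are compared turn out to be the very same set.

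For part~(1) I would invoke Lemma~\ref{lem:null2}, part~(1) directly: it states that whenever $f$ induces a test order $\le$, the downward closure $[\le x]$ coincides as a set with $[f\le f(x)]$. Identical sets have identical probability, so $\P[\le x] = \P[f\le f(x)]$ follows immediately, with no computation.

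For part~(2) the hypothesis is reversed, with $\le$ inducing $f$, and the cleanest route is to reduce to part~(1). By Lemma~\ref{lem:o-s-o}, if a test order $\le$ induces a test statistic $f$ then $f$ in turn induces that same $\le$. Once $f$ induces $\le$, part~(1) applies verbatim and yields $\P[\le x] = \P[f\le f(x)]$, which is exactly the claimed equality read in the opposite direction.

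The main obstacle is expository rather than mathematical: no step is genuinely hard, so the whole task is to pick the right prior result for each direction and to exploit the symmetry furnished by Lemma~\ref{lem:o-s-o} that lets the second part lean on the first. An alternative for part~(2) is to recall that the test statistic induced by a test order is canonic, whence $f(x) = \P[f\le f(x)]$ while also $f(x) = \P[\le x]$ by the definition of the induced statistic; but the reduction to part~(1) is shorter and avoids re-deriving canonicity.
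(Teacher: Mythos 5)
Your proof is correct and matches the paper's own argument exactly: part (1) is Lemma~\ref{lem:null2}, part~1 (the two events are the same set), and part (2) reduces to part (1) via Lemma~\ref{lem:o-s-o}. Nothing to add.
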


\begin{proof}\mbox{}

\smallskip\noindent
(1) Use Lemma~\ref{lem:null2}, part~1.

\smallskip\noindent
(2) By Lemma~\ref{lem:o-s-o}, $f$ induces $\le$. Now use (1).
\end{proof}

\Q Let's go back to the two examples of \S\ref{sec:logic}, compute the exact p-values of the actual outcomes and compare them.

\A The examples do not have sufficient information. We need to furnish them with reasonable test tools.
For the Coin Example, we can use the test statistic $M(x) = \Min{\{\Heads{(x)},\Tails{(x)}\}}$ defined in the continuation of the Coin Example in \S\ref{sec:tools}.
With respect to that test statistic, the exact p-value $p_1$ of the actual outcome is as follows.
\[
  p_1 = \P[f\le1] = \frac{2(1+42)}{2^{42}} <
  \frac{2\times2^{6}}{2^{42}} = 2^{-35}
\]

Concerning the Lottery Example, one possible approach is given by the following graph $G$.
\begin{itemize}
\item The vertices of $G$ are the 4,000,000 lottery participants plus John, the lottery organizer, whether he bought any lottery tickets or not.
\item Two distinct vertices $X$ and $Y$ are connected by an edge in $G$ if and only if at least one of the following conditions holds:
\begin{itemize}
  \item $X$ is a parent of $Y$ or the other way round.
  \item $X,Y$ are spouses.
  \item $X,Y$ are close friends.
\end{itemize}
\end{itemize}
Given the graph $G$, we have a natural test statistic $\delta(X)$ on the lottery participants, namely the length of the minimal chain of edges from John to $X$. If John bought at least one ticket then the minimal value of $\delta$ is 0; otherwise it is one. Donna, John's wife, is at distance 1 from John, and she bought 4 tickets. We can estimate the exact p-value $p_2$ of the actual outcome from below.
\[
  p_2 = \P(\delta\le1) \ge \frac4{10,000,000} >
  \frac{2^2}{2^{4} \times 2^{10} \times 2^{10}} = 2^{-22}.
\]
It follows that
\[
  p_1 < 2^{-35} < 2^{-22} < p_2.
\]

\Q This is interesting. The win of the lottery organizer's wife seemed more striking to me than getting 41 heads in 42 coin tosses. I wonder whether $p_2$ is small enough to impugn the  hypothesis that the lottery was fair.

\A Well, it seems safe to assume that the lottery participants at distance $\le1$ from John bought less than 10,000 tickets. Under this assumption,
\[
  p_2 < \frac{10,000}{10,000,000} = \frac1{1000} = 0.1\%
\]
which is small enough for the lax as well as strict Cournotians.

\section{p-functions}
\label{sec:pval}

In this section $\eps$ ranges over the non-negative reals.

\begin{lemma}\label{lem:canonic-p}
Let $f$ be a canonic test statistic.
\begin{enumerate}
\item $\P[f\le\eps] =\eps$  if $\eps\in\Range{f}$.
\item $\P[f\le\eps] =\eps$  if $\eps = \sup(S)$ for some $S\subseteq\Range{f}$.
\item $\P[f\le\eps] \le\eps$ for every $\eps$.
\end{enumerate}
\end{lemma}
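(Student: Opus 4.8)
The plan is to prove the three parts in order, since part~(2) will invoke part~(1) and part~(3) will reduce to part~(2). Part~(1) is immediate from the definition of canonicity: given $\eps\in\Range{f}$, choose $x$ with $f(x)=\eps$; since $f$ is canonic, $f=\hat f$, so $\eps=f(x)=\hat f(x)=\P[f\le f(x)]=\P[f\le\eps]$. Nothing further is needed here.

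For part~(2), write $\eps=\sup(S)$ with $S\subseteq\Range{f}$ (so $S\ne\emptyset$). If $\eps\in\Range{f}$, then part~(1) already gives $\P[f\le\eps]=\eps$, so the only substantive case is $\eps\notin\Range{f}$. Here I would exploit that $\Omega$ is countable, hence $\Range{f}$ and $S$ are countable, which lets me choose an increasing sequence $s_1\le s_2\le\cdots$ in $S$ with $s_n\to\eps$. Since no outcome takes the value $\eps$, we have $[f\le\eps]=[f<\eps]=\bigcup_n [f\le s_n]$, an increasing union: the inclusion ``$\subseteq$'' holds because any $x$ with $f(x)<\eps$ satisfies $f(x)<s_n$ for all large $n$, and ``$\supseteq$'' holds because each $s_n<\eps$ (as $s_n\in S$ but $\eps\notin S$). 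By continuity of probability from below (in the discrete setting merely a rearrangement of a nonnegative series), $\P[f\le\eps]=\lim_n \P[f\le s_n]$, and since each $s_n\in\Range{f}$, part~(1) gives $\P[f\le s_n]=s_n$; hence $\P[f\le\eps]=\lim_n s_n=\eps$.

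For part~(3), fix any $\eps\ge0$ and set $S=\{s\in\Range{f}: s\le\eps\}$. If $S=\emptyset$, then $[f\le\eps]=\emptyset$ and $\P[f\le\eps]=0\le\eps$. Otherwise let $\eps'=\sup(S)\le\eps$; because $f(x)\in\Range{f}$ for every $x$, we have $f(x)\le\eps \iff f(x)\in S \iff f(x)\le\eps'$, so $[f\le\eps]=[f\le\eps']$. Part~(2) applies to $\eps'=\sup(S)$ and yields $\P[f\le\eps']=\eps'$, whence $\P[f\le\eps]=\eps'\le\eps$, as required.

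The main obstacle is the non-attained-supremum case of part~(2): establishing the set identity $[f\le\eps]=\bigcup_n[f\le s_n]$ and justifying the passage to the limit. Everything there hinges on the two features highlighted in the lemma's preamble, namely that $\eps\notin\Range{f}$ forces $[f\le\eps]=[f<\eps]$, and that the countability of $\Omega$ makes the approximating sequence $s_n$ available so that continuity of $\P$ from below can be applied.
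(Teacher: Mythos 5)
Your proof is correct and takes essentially the same route as the paper's: part~(1) directly from canonicity, part~(2) via a monotone approximating sequence in $S$ and continuity of $\P$ from below, and part~(3) by reducing to $\eps'=\sup\{s\in\Range{f}: s\le\eps\}$. You are in fact slightly more careful than the paper, which does not separate the case $\eps\in\Range{f}$ in part~(2) or the case $S=\emptyset$ in part~(3).
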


\begin{proof}
(1) If $\eps = f(x)$ then
\begin{align*}
\P[f\le\eps] &= \P[f\le f(x)] \\
             &= f(x) && \text{as $f$ is canonic} \\
             &= \eps
\end{align*}

\noindent
(2) If $\eps = \sup(S)$ for some $S\subseteq\Range{f}$ then there is a sequence $s_1 < s_2 < \dots$ of elements of $S$ converging to $\eps$. Then
\[
  \P[f\le\eps] = \P[f\le s_1] + \P[s_1< f\le s_2] + \P[s_2<f\le s_3] + \cdots
\]
so that the probability
\[
  \P[f\le s_n] = \P[f\le s_1] + \P[s_1< f\le s_2] + \cdots
                              + \P[s_{n-1}<f\le s_n]
\]
converges to $\P[f\le\eps]$ as $n\to\infty$. Therefore
\begin{align*}
  \P[f\le\eps] &= \lim_{n\to\infty} \P[f\le s_n] \\
               &= \lim_{n\to\infty} s_n &&\text{by (2)}\\
               &= \eps &&\text{by the choice of } s_1 < s_2 < \dots.
\end{align*}

\smallskip\noindent
(3) Let $\eps_0 = \sup\{s\in\Range{f}: s\le\eps\}$. Then $[f\le\eps] = [f\le\eps_0]$ and therefore $\P[f\le\eps] = \P[f\le\eps_0] = \eps_0 \le \eps.$
\end{proof}

\begin{minipage}{\textwidth}
\begin{definition}\label{def:pfun}\mbox{}
\begin{itemize}
\item A \emph{p-function} is a test statistic $f$ such that
$
   \P[f\le\eps]\le\eps
$
for every $\eps$.
\item A p-function $f$ is \emph{exact} if
$
   \P[f\le\eps] = \eps
$
for every $\eps\in\Range{f}$; otherwise $f$ is \emph{conservative}.
\item Values of a p-function are \emph{p-values}.
\end{itemize}
\end{definition}
\end{minipage}

If $f$ is a p-function then $cf$ is a p-function for every $c\ge1$. Indeed
$$
 \P[cf\le\eps] = \P[f\le\eps/c] \le
 % \P[f\le\eps]
 \eps/c
 \le \eps.
$$
If $c<1$ then $cf$ may not be a p-function. In particular if $\P[f\le\eps] = \eps$ for at least one $\eps$  then $cf$ is not a p-function because, for that $\eps$, we have
$$\P[cf\le c\eps] = \P[f\le \eps] = \eps > c\eps.$$

\begin{theorem}\label{lem:exact}
For any test statistic $f$, the following two claims are equivalent:
\begin{enumerate}
\item $f$ is canonic.
\item $f$ is an exact p-function.
\end{enumerate}
\end{theorem}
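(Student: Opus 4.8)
The plan is to prove the two implications separately, leaning on Lemma~\ref{lem:canonic-p} for the forward direction and on nothing more than the definition of exactness for the reverse direction.

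For $(1)\implies(2)$, I would simply unpack Lemma~\ref{lem:canonic-p} applied to the canonic statistic $f$. Part~(3) of that lemma states that $\P[f\le\eps]\le\eps$ for every $\eps$, which is precisely the defining inequality of a p-function. Part~(1) states that $\P[f\le\eps]=\eps$ whenever $\eps\in\Range{f}$, which is precisely the exactness condition. Taken together these show that $f$ is an exact p-function, so this direction demands no computation beyond citing the lemma.

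For $(2)\implies(1)$, I would establish $f=\hat f$ pointwise. Fix an outcome $x$. Since $f(x)\in\Range{f}$ and $f$ is assumed exact, the exactness condition instantiated at $\eps=f(x)$ yields $\P[f\le f(x)]=f(x)$. But $\P[f\le f(x)]$ is by definition $\hat f(x)$, so $\hat f(x)=f(x)$ for every $x$; that is, $f$ coincides with its canonic version and hence is canonic.

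I do not anticipate a genuine obstacle: both halves collapse once Lemma~\ref{lem:canonic-p} and the definitions are in hand. The only point worth flagging is that the reverse direction uses solely the exactness clause of the phrase ``exact p-function,'' evaluated at arguments $f(x)$ that automatically lie in $\Range{f}$; the global inequality $\P[f\le\eps]\le\eps$ plays no role in recovering canonicity, even though it is part of the hypothesis.
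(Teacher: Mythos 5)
Your proposal is correct and follows essentially the same route as the paper: the forward direction is exactly an appeal to Lemma~\ref{lem:canonic-p} (parts (1) and (3)), and the reverse direction is the same pointwise computation $\hat f(x)=\P[f\le f(x)]=f(x)$ using exactness at $\eps=f(x)\in\Range{f}$. Your closing observation that the inequality clause of the p-function definition is not needed for the reverse implication is accurate and matches what the paper's proof implicitly does.
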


\begin{proof}
The implication (1)$\to$(2) is proven in Lemma~\ref{lem:canonic-p}. To prove the implication (2)$\to$(1), assume that $f$ is an exact p-function, $x$ is an arbitrary outcome and $\eps = f(x)$. We have

\begin{align*}
\P[f\le f(x)] &= \P[f\le \eps] \\
              &= \eps && \text{as $f$ is exact} \\
              &= f(x).&&\qedhere
\end{align*}
\end{proof}

\begin{corollary}\mbox{}
\begin{itemize}
\item Exact p-values are values of a exact p-function.
\item Every test order induces a unique exact p-function.
\item Every test statistic $f$ is equivalent to a unique exact p-function.
\end{itemize}
\end{corollary}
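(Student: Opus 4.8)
The plan is to treat Theorem~\ref{lem:exact} as a dictionary that identifies the exact p-functions with the canonic test statistics; under this identification each of the three bullets becomes a restatement of a fact already proved about canonic test statistics. I would prove the bullets in turn.

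For the first bullet, recall that the exact p-value of an outcome $x$ is, by Definition~\ref{def:p2}, the number $\hat f(x) = \P[f\le f(x)]$. By Lemma~\ref{lem:equiv} the statistic $\hat f$ is canonic, so by Theorem~\ref{lem:exact} it is an exact p-function, and the exact p-values $\hat f(x)$ are precisely its values. Starting instead from Definition~\ref{def:p1}, the exact p-value $\P[\le x]$ is the value at $x$ of the test statistic induced by $\le$, which is canonic by the Corollary concluding Section~\ref{sec:o-s} and hence exact by Theorem~\ref{lem:exact}; the two routes agree.

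For the second bullet, a test order $\le$ induces the single test statistic $f(x) = \P[\le x]$ of Definition~\ref{def:ind}. The statistic induced by any test order is canonic, so Theorem~\ref{lem:exact} makes $f$ an exact p-function, which gives existence. For uniqueness I would show that any exact p-function $g$ inducing $\le$ equals $f$: by Theorem~\ref{lem:exact} such a $g$ is canonic, so $\hat g = g$; since $g$ induces $\le$, the first clause of the Corollary concluding Section~\ref{sec:o-s} gives that $\le$ induces $\hat g = g$; but the statistic induced by $\le$ is by definition $f$, whence $g = f$.

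For the third bullet, Lemma~\ref{lem:equiv} already gives that $f$ is equivalent to a unique canonic test statistic $\hat f$, and Theorem~\ref{lem:exact} says the canonic test statistics are exactly the exact p-functions; intersecting, the exact p-functions equivalent to $f$ are precisely the canonic statistics equivalent to $f$, of which there is exactly one, namely $\hat f$. The only delicate point is the uniqueness in the second bullet: one must not silently equate ``the statistic induced by $\le$'' (a uniquely determined object) with ``an exact p-function whose induced order is $\le$,'' and passing between these two directions is exactly what Theorem~\ref{lem:exact} together with the canonicity Corollary of Section~\ref{sec:o-s} supplies.
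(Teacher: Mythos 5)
Your proposal is correct: each bullet follows exactly as you say from Theorem~\ref{lem:exact} combined with Lemma~\ref{lem:equiv} and the corollary closing Section~\ref{sec:o-s}, which is precisely the derivation the paper leaves implicit by stating the corollary without proof. Your extra care on the second bullet --- distinguishing ``the statistic induced by $\le$'' from ``an exact p-function that induces $\le$'' and bridging the two via canonicity --- is a worthwhile strengthening rather than a deviation.
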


\section{Data snooping}
\label{sec:bonferroni}

\Q What about conservative p-functions? Do you really have any use for them?

\A Yes. A good example is the so-called Bonferroni correction. There is a good explanation in \cite{Abdi}.

\Q But what is the idea?

\A Consider a version of our coin example where, for simplicity, you toss a coin only 15 times. Again the null hypothesis is that all outcomes are equally probable, and again we will use the test statistic $M(x) = \Min\{\Heads(x),\Tails(x)\}$. Suppose that you really want to impugn the null hypothesis. So you repeat the trial over and over until you encounter an outcome where $M$ is 1. At this point, you report that you performed a trial and got an outcome with p-value
\[
  \P[M\leq1] = \frac{2(1+15)}{2^{15}} = \frac{2^5}{2^{15}} = 2^{-10} = \frac1{1024}.
\]

\Q But this is cheating.

\A Exactly. Yet, such data snooping occurs in science though rarely in such a blatant form; a scientist may genuinely forget about some steps in the preliminary analysis of data. In this connection, you may enjoy a relevant issue \cite{xkcd} of the XKCD webcomic.

\Q I remember hearing about publication bias. What is it?

\A It is a rather insidious kind of data snooping. Imagine that a large number of groups of scientists are testing an important null hypothesis. Eventually some group gets a significant p-value and submits their findings for publication while the other groups don't publish anything on the issue. Some influential statisticians argue that most published research findings are wrong and that publication bias may be the main reason for that \cite{Ioannidis:2005}.

\Q You cannot avoid multiple testing of a null hypothesis. What do you do?

\A Consider a case where the same probabilistic trial is performed $n$ times, using test statistics $f_1, \dots, f_n$ and getting p-values $p_1, \dots, p_n$. Of course one would love to report the p-value $\min\{p_1, \dots, p_n\}$ but it needs to be adjusted for the multiple testing of the null hypothesis. The Bonferroni correction is one way to adjust that p-value, by multiplying it by $n$. The p-value $n\cdot\min\{p_1,\ldots,p_n\}$ is valid though usually conservative. More exactly the test statistic $f = n\min(f_1, \dots, f_n)$ is a p-function, typically conservative. Indeed,
$$
  \P[f\le\eps]
  \le
  \P\big(\cup_{i=1}^n[f_i\le\eps/n]\big)
  \le
  \sum_{i=1}^n\P[f_i\le\eps/n]
  \le
  \sum_{i=1}^n\eps/n
  =
  \eps.
$$

\subsection*{Acknowledgments}

We thank Andreas Blass for useful comments on a draft of this paper.

\end{document}